\documentclass{amsart}
\usepackage[leqno]{amsmath}

\usepackage{amsmath}
\usepackage{amsthm}
\usepackage{amssymb}
\usepackage{amscd}
\usepackage{amsxtra}     
\usepackage{epsfig}
\usepackage{verbatim}
\usepackage[all]{xypic}


\theoremstyle{plain} 

\newtheorem{thm}{Theorem}[section]

\newtheorem{prop}[thm]{Proposition}

\newtheorem{cor}[thm]{Corollary}

\newtheorem{ques}[thm]{Question}

\theoremstyle{remark}

\numberwithin{equation}{section}

\newcommand{\ZZ}{\mathbb{Z}}
\newcommand{\QQ}{\mathbb{Q}}
\newcommand{\RR}{\mathbb{R}}

\newcommand{\tensor}{\otimes}

 \DeclareMathOperator{\Spec}{Spec}
 \DeclareMathOperator{\Sing}{Sing}
 
\DeclareMathOperator{\Reg}{R}

 \DeclareMathOperator{\im}{im}

\DeclareMathOperator{\divi}{div}

\DeclareMathOperator{\Cl}{Cl}
 \DeclareMathOperator{\G}{G}
 \DeclareMathOperator{\A}{A}

 \newcommand{\Min}{\textup{Min}}

\begin{document}

\bibliographystyle{plain}

\title{On injectivity of maps between Grothendieck groups induced by completion}
\author{Hailong Dao}
\address{Department of Mathematics, University of Utah,  155 South 1400 East, Salt Lake City,
UT 84112-0090, USA} \email{hdao@math.utah.edu} \maketitle
\begin{abstract} We give an example of a local normal domain $R$
such that the map of Grothendieck groups $\G(R) \to \G(\hat R)$ is
not injective. We also raise some questions about the kernel of that
map.

\end{abstract}

\section{Introduction}

Let $(R,m,k)$ be a local ring and $\hat R$ the $m$-adic completion
of $R$. Let $\mathcal{M}(R)$ be the category of finitely generated
$R$-modules. The Grothendieck group of finitely generated modules
over $R$ is defined as:
\begin{displaymath}
 \G(R)= \frac{\displaystyle \bigoplus_{M \in
\mathcal{M}(R)}{\ZZ [M]}}{\langle[M_2]-[M_1]-[M_3] \ | \ 0\to M_1
\to M_2 \to M_3 \to 0 \ \text{is exact}\rangle}
\end{displaymath}

In \cite{KK}, Kamoi and Kurano studied injectivity of the map $\G(R)
\to \G(\hat R)$ induced by flat base-change.  They showed that such
map  is the injective in the following cases  : 1)$R$ is Hensenlian,
2) $R$ is the localization at the irrelevant ideal of a positively
graded ring over a field, or, 3) $R$ has only isolated singularity.
Their results raise the question: Is the map between Grothendieck
group induced by completion  always injective?

In \cite{Ho1}, Hochster announced a counterexample to the above
question:

\begin{thm}\label{melex}
Let $k$ be a field. Let $R =
k[x_1,x_2,y_1,y_2]_{(x_1,x_2,y_1,y_2)}/(x_1x_2
-y_1x_1^2-y_2{x_2}^2)$. Let $P = (x_1,x_2)$ and $M=R/P$. Then $[M]$
is in the kernel of the map $\G(R) \to \G(\hat R)$. However $[M]
\neq 0$ in $\G(R)$.
\end{thm}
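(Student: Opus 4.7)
I would organize the proof as two independent parts. For $[\hat M]=0$ in $\G(\hat R)$, the plan exploits the reducible initial form of $f := x_1 x_2 - y_1 x_1^2 - y_2 x_2^2$: applying Hensel's lemma in $\hat R' := k[[x_1,x_2,y_1,y_2]]$, I would factor $f = u (x_1 - a x_2)(x_2 - b x_1)$ with $u$ a unit and $a, b \in (y_1,y_2)\, k[[y_1,y_2]]$. Matching coefficients reduces this to the single functional equation $t = y_1 y_2 (1+t)^2$ for $t := ab$, which is uniquely solvable in $k[[y_1,y_2]]$ since the derivative of the defining polynomial at $t=0$ is a unit. Writing $h_i$ for the two linear factors, $\hat R \cong \hat R'/(h_1 h_2)$ has minimal primes $Q_i = (h_i)\hat R$, and $\hat R/Q_1 \cong k[[x_2,y_1,y_2]]$. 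Since $a \in (y_1,y_2)$, the image of $P\hat R$ in $\hat R/Q_1$ equals $(a x_2, x_2) = (x_2)$, principal on the non-zero-divisor $x_2$, so the short exact sequence
\[
0 \to \hat R/Q_1 \xrightarrow{\ x_2\ } \hat R/Q_1 \to \hat R/P\hat R \to 0
\]
yields $[\hat M] = 0$ in $\G(\hat R)$.

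For $[M] \neq 0$ in $\G(R)$, my plan is to localize at $P$: the induced map $\G(R) \to \G(R_P)$ sends $[M]$ to $[R_P/PR_P]$, so it suffices to show the latter is nonzero in $\G(R_P)$. The ring $R_P$ is a one-dimensional Cohen-Macaulay local domain, a hypersurface of multiplicity $2$ in the two-dimensional regular local ring $S_P$ (where $S$ is the polynomial ring localized at its irrelevant ideal), and has isolated singularity. By the Kamoi-Kurano injectivity result \cite{KK}, $\G(R_P) \hookrightarrow \G(\widehat{R_P})$, so it suffices to work in the completion. The completion $\widehat{R_P} \cong K[[x_1,x_2]]/(f)$ with $K := k(y_1, y_2)$ is itself a domain (since $f$ remains irreducible over $K$), and its integral closure is explicitly the DVR $K(\sqrt{1 - 4 y_1 y_2})[[x_1]]$, whose residue field is a quadratic extension of $K$. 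The standard length formula via the normalization then gives $\ell(\widehat{R_P}/g\widehat{R_P}) = 2\,\nu(g) \in 2\ZZ$ for every non-zero-divisor $g \in \widehat{R_P}$, where $\nu$ is the DVR valuation.

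The consequence for $\G(\widehat{R_P})$ is that every short exact sequence $0 \to \widehat{R_P} \xrightarrow{g} \widehat{R_P} \to \widehat{R_P}/g\widehat{R_P} \to 0$ contributes the relation $\ell(\widehat{R_P}/g\widehat{R_P}) \cdot [R_P/PR_P] = 0$ with even coefficient, so the subgroup of $\G(\widehat{R_P})$ generated by $[R_P/PR_P]$ is annihilated by $2$. The main obstacle is showing this subgroup is exactly $\ZZ/2\ZZ$ rather than zero. I would address this via the codimension-one cycle class map, which for the one-dimensional local ring $\widehat{R_P}$ identifies the torsion part of $\G(\widehat{R_P})$ with the local Chow group $\CH^1(\widehat{R_P}) = \ZZ / \im(\ord) = \ZZ/2\ZZ$, sending $[R_P/PR_P]$ to the generator.
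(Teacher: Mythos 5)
The paper itself gives no proof of this theorem --- it is quoted from Hochster's unpublished note --- so the right comparison is with the paper's proof of the analogous Proposition \ref{main}. Your Part 1 is the same mechanism as the paper's: exhibit a height-one prime $Q_1\subset\hat R$ with $P\hat R=Q_1+(x_2)$ and $x_2$ a non-zero-divisor on $\hat R/Q_1\cong k[[x_2,y_1,y_2]]$. The only difference is that Hochster's $f$ is not diagonal, so in place of adjoining $\sqrt{w+1}$ you need the Hensel factorization; your reduction to $t=y_1y_2(1+t)^2$ is correct and uniquely solvable in every characteristic since the derivative $1-2y_1y_2t'$-term evaluates to a unit at $t=0$. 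Your Part 2 is genuinely different from the paper's template: the paper would invoke Swan's theorem to get $\G(R_P)\cong\ZZ\oplus\ZZ/2\ZZ$ for the localized quadric over $K=k(y_1,y_2)$ (and here, since $\dim R_P=1$, the only primes are $0$ and $PR_P$, so non-cyclicity alone forces $[R_P/PR_P]\neq 0$ with no class-group input), whereas you compute directly with the normalization of the one-dimensional local domain and the parity of lengths $\ell(A/gA)=[L:K]\,\nu(g)=2\nu(g)$. This is more elementary and avoids Swan entirely. Three points to tighten: (i) the detour through $\widehat{R_P}$ via the Kamoi--Kurano isolated-singularity theorem is unnecessary, since the normalization of $R_P$ itself is already local with residue field $L$ of degree $2$ over $K$, so the length formula holds before completing; (ii) the identification $L=K(\sqrt{1-4y_1y_2})$ breaks in characteristic $2$, but $L=K[\theta]/(y_2\theta^2-\theta+y_1)$ is still a quadratic field extension there (the equation reduces to $s^2+s=y_1y_2$, which has no solution in $k(y_1,y_2)$), so the evenness of lengths survives; (iii) the final step --- that for a one-dimensional local domain the subgroup generated by the class of the residue field is $\A_0=\ZZ/\langle\ell(A/gA)\rangle$ on the nose, and not a proper quotient of it --- is the one nontrivial input and should be pinned to a reference, e.g.\ the result of Chan cited in Section \ref{kernel}, which for $d=1$ gives $\G(A)\cong\A_1(A)\oplus\A_0(A)$, or the K-theoretic localization sequence. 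With those repairs your argument is a complete and correct proof.
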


Hochster's example comes from the ``direct summand hypersurface" in
dimension $2$ and is not normal. He predicted that there is also an
example which is normal. The main purpose of this note is to provide
such an example. We have:

\begin{prop}\label{main}
Let $R = \RR[x,y,z,w]_{(x,y,z,w)}/(x^2+y^2-(w+1)z^2)$. $R$ is a
normal domain. Let $P = (x,y,z)$ and $M=R/P$. Then $[M]$ is in the
kernel of the map $\G(R) \to \G(\hat R)$. However $[M] \neq 0$ in
$\G(R)$.
\end{prop}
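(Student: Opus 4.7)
As $R$ is a hypersurface in a regular local ring it is Cohen--Macaulay and therefore satisfies $(S_2)$, so it suffices to verify $(R_1)$ and the irreducibility of $f := x^2+y^2-(w+1)z^2$. The partials $2x,\,2y,\,-2(w+1)z,\,-z^2$ vanish simultaneously only on $V(x,y,z)$, a curve in the $3$-dimensional $\Spec R$, so the singular locus has codimension $2$. Irreducibility of $f$ in $\RR[x,y,z,w]$ follows because, viewed as a ternary quadratic form in $x,y,z$ over $\RR(w)$, it is anisotropic with nonsquare discriminant $-(w+1)$.

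\textbf{Vanishing in $\G(\hat R)$.} In $\hat R$ the element $1+w$ has residue $1$, so the binomial series exhibits a unit $u = (1+w)^{1/2} \in \RR[[w]] \subseteq \hat R$ with $u^2 = 1+w$. Setting $a := uz-y$ and $b := uz+y$ defines a change of coordinates on $\RR[[x,y,z,w]]$ carrying $f$ to $x^2-ab$ and $(x,y,z)$ to $(x,a,b)$. Hence
\[
\hat R \;\cong\; \RR[[x,a,b,w]]/(x^2-ab), \qquad \hat M \;\cong\; \hat R/(x,a,b) \;\cong\; \RR[[w]].
\]
The quotient $\hat R/(x,a) \cong \RR[[b,w]]$ is a $2$-dimensional regular domain and $b$ acts on it as a nonzerodivisor with cokernel $\hat M$, so the short exact sequence
\[
0 \to \hat R/(x,a) \xrightarrow{\;\cdot b\;} \hat R/(x,a) \to \hat M \to 0
\]
yields $[\hat M] = 0$ in $\G(\hat R)$.

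\textbf{Nonvanishing in $\G(R)$ --- the crux.} The height-one prime $(x,a)$ driving the previous step has no counterpart inside $R$: its existence requires $\sqrt{1+w}$, and $1+w$ is a nonsquare both in $R$ and in the residue field $\RR(w)$ of $R_P$, so no ideal of $R$ completes to $(x,a)\hat R$. It is instructive to note that this obstruction cannot be detected after complexifying: the factorization $x^2+y^2=(x+iy)(x-iy)$ provides the analogue
\[
0 \to R_\CC/(x+iy,z) \xrightarrow{\;\cdot(x-iy)\;} R_\CC/(x+iy,z) \to M\otimes_\RR\CC \to 0
\]
with $R_\CC := R\otimes_\RR\CC$, showing $[M]$ also dies in $\G(R_\CC)$. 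Consequently any functional on $\G(R)$ that separates $[M]$ from $0$ must be sensitive to the real structure of $R$. To complete this step I would exhibit such an additive invariant, either via Hochster's theta pairing $\theta^R$ on the hypersurface $R$ paired with a carefully chosen test module, or via the image of $[M]$ in a suitable quotient of $\CH_1(R)$ that records the nontrivial class of the singular curve $V(P)$ on $\Spec R$. Constructing and computing this invariant is the main difficulty of the proof; once it is produced, the nonvanishing $[M]\neq 0$ follows immediately.
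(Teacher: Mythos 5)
Your normality check and your proof that $[\hat M]=0$ in $\G(\hat R)$ are correct and essentially identical to the paper's: the paper takes $\alpha=\sqrt{w+1}\in\hat R$, the height-one prime $Q=(x,\,y-\alpha z)\hat R$, and the exact sequence $0\to\hat R/Q\to\hat R/Q\to\hat R/P\hat R\to 0$ given by multiplication by $y+\alpha z$, which is your sequence after the substitution $a=uz-y$, $b=uz+y$.

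The third step, however, is a genuine gap, and it is the heart of the proposition: you explicitly stop short of producing an invariant that separates $[M]$ from $0$. Worse, the two candidates you name cannot work as stated. Your own complexification remark shows $[M\otimes_\RR\CC]=0$ in $\G(R\otimes_\RR\CC)$, and restricting scalars back along the degree-two extension gives $2[M]=0$ in $\G(R)$; so $[M]$ is $2$-torsion, and any additive invariant valued in a torsion-free group --- in particular Hochster's $\ZZ$-valued theta pairing, or the class of $[M]$ in a rational Chow group --- necessarily vanishes on it. (The theta pairing is also not defined on $R$ itself, whose singular locus is one-dimensional.) What is needed is an integral computation, and the paper supplies it as follows: it suffices to show $[M_P]\neq 0$ in $\G(R_P)$, and $R_P\cong K[x,y,z]_{(x,y,z)}/(f)$ with $K=\RR(w)$. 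By Swan's theorem on quadric hypersurfaces (applicable because the even Clifford algebra $C_0(f)$ is simple, the rank $3$ being odd), combined with the Kamoi--Kurano isomorphism $\G(K[x,y,z]/(f))\cong\G(R_P)$, one gets $\G(R_P)\cong\ZZ\oplus\ZZ/2\ZZ$. A degree-parity argument shows $a(w)^2+b(w)^2=(w+1)c(w)^2$ has no nontrivial polynomial solution, so $f$ is anisotropic over $K$ and Samuel's theorem gives $\Cl(R_P)=0$; hence $\G(R_P)$ is generated by $[R_P]$ and $[R_P/PR_P]$, and since $\ZZ\oplus\ZZ/2\ZZ$ is not cyclic, $[R_P/PR_P]\neq 0$. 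Without this (or an equivalent integral computation detecting the $2$-torsion) your argument does not establish the main claim.
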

This will be proved in Section \ref{example}. We note that Kurano
and Srinivas has recently constructed an example of a local ring $R$
such that the map $\G(R)_{\QQ} \to \G(\hat R)_{\QQ}$ is not
injective (see \cite{KS}). The ring in their example is not normal,
and we do not know if a normal example exists in that context (i.e.
with rational coefficients).

In section \ref{kernel} we will discuss some questions on the kernel
of the map $\G(R) \to \G(\hat R)$.

We would like to thank Anurag Singh for telling us about this
question and for some inspiring conversations. We thank Melvin
Hochster for generously sharing his unpublished note \cite{Ho2},
which provided the key ideas for our example. We also thank the
referee for many helpful comments.

\section{Our example}\label{example}
We shall prove Proposition \ref{main}. First we need to recall some
classical results:
\begin{cor} (Swan, \cite{Sw}, Corollary 11.8)\label{2.1}
Let $k$ be a field of characteristic not $2$, $n>1$ an integer and
$R = k[x_1,...,x_n]/(f)$ where $f$ is a non-degenerate quadratic
form in  $k[x_1,...,x_n]$. Then $\G(R) = \ZZ\oplus \ZZ/2\ZZ$ if
$C_0(f)$, the even part of the Clifford algebra of $f$, is simple.
\end{cor}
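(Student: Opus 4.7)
The plan is to reduce the computation of $\G(R)$ to the Grothendieck group of the smooth projective quadric $Q = \Proj R \subset \PP^{n-1}$ defined by $f$, and then invoke the description of $K_0(Q)$ via Clifford modules that forms the core of Swan's paper.

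First, the rank map $\G(R) \to \ZZ$, sending $[M]$ to the generic rank of $M$, is split surjective by $1 \mapsto [R]$, giving $\G(R) = \ZZ \oplus \G'(R)$ where $\G'(R)$ is generated by classes of torsion $R$-modules. It thus suffices to prove $\G'(R) = \ZZ/2\ZZ$. Since $f$ is non-degenerate, $R$ has an isolated singularity at the irrelevant ideal $\mathfrak{m} = (x_1,\ldots,x_n)$, and $U = \Spec R \setminus \{\mathfrak{m}\}$ is a smooth affine scheme. The localization sequence
\[
\G(R/\mathfrak{m}) \to \G(R) \to \G(U) \to 0,
\]
combined with the fact that $U \to Q$ is a $\mathbb{G}_m$-torsor, gives $\G(U) = K_0(U) = K_0(Q)/(1 - [\OO_Q(1)])$. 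The map $\ZZ = \G(R/\mathfrak{m}) \to \G(R)$ sends $1$ to $[k]$, and tracing this through the $2$-periodic Eisenbud resolution of $k$ as a module over the hypersurface $R$ produces an explicit relation in $\G'(R)$ compatible with $1 = [\OO_Q(1)]$ on the quotient.

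Next, one invokes Swan's structural computation of $K_0(Q)$ for a smooth projective quadric. His theorem identifies $K_0(Q)$ as generated by the twists $[\OO_Q(i)]$ together with classes of spinor bundles built from simple modules over the Clifford algebra $C(f)$; the torsion in $K_0(Q)$ is controlled by the module theory of the even part $C_0(f)$. Under the hypothesis that $C_0(f)$ is simple, $C(f)$ has essentially one non-trivial simple module (up to the $\ZZ/2$-grading shift), contributing a single $\ZZ/2\ZZ$-class. One then checks that this torsion class survives the passage through the relation $1 = [\OO_Q(1)]$ and the identification of the image of $[k]$, while the free ``rank part'' generated by the $[\OO_Q(i)]$ collapses completely. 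This yields $\G'(R) = \ZZ/2\ZZ$, and hence $\G(R) = \ZZ \oplus \ZZ/2\ZZ$.

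The main obstacle is the third step: one must carry out Swan's identification of the spinor part of $K_0(Q)$ with the $\ZZ/2\ZZ$-graded module theory of $C(f)$ and show how simplicity of $C_0(f)$ cuts the torsion down to exactly $\ZZ/2\ZZ$. This structural input, including the verification that the spinor class is not swallowed by the rank generators modulo the relation $[\OO_Q] = [\OO_Q(1)]$, constitutes the substantive content of Swan's paper, and the strategy above treats it as the essential black-box input.
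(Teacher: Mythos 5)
There is no proof in the paper to measure your attempt against: the statement is quoted directly from Swan (\cite{Sw}, Corollary 11.8), and the paper uses it as an imported black box. That said, your outline does reproduce the actual architecture of Swan's argument: split off the free rank summand, use the localization sequence $\G(k)\to \G(R)\to \G(U)\to 0$ for the punctured cone $U=\Spec R\setminus\{\mathfrak m\}$, identify $\G(U)=K_0(U)$ with $K_0(Q)/(1-[\OO_Q(1)])$ via the $\mathbb{G}_m$-torsor $U\to Q=\Proj R$, and then feed in the Clifford-theoretic description of $K_0$ of a smooth quadric. As a roadmap to Swan's proof this is accurate.

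The difficulty is that everything you defer \emph{is} the theorem. The description of $K_0(Q)$ by twists of $\OO_Q$ together with spinor bundles attached to graded $C(f)$-modules, the verification that simplicity of $C_0(f)$ leaves exactly one nontrivial class of order $2$ after imposing $[\OO_Q]=[\OO_Q(1)]$, and the determination of the image of $[k]$ under $\G(k)\to\G(R)$ (which you only propose to ``trace'' through the matrix-factorization resolution, without computing it) together constitute the body of Swan's paper; the steps you do carry out (rank splitting, localization, homotopy invariance for the torsor) are generic and contribute nothing specific to quadrics. So the proposal has the same epistemic status as the paper's treatment---a citation---and should be labelled as such rather than as a proof. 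Two smaller points to watch if you ever fill it in: the generic-rank map only makes sense because $R$ is irreducible, and for $n=2$ this is exactly where simplicity of $C_0(f)$ (equivalently, anisotropy of $f$) is needed; and $U$ is quasi-affine but not affine once $n\ge 3$, although nothing you use actually requires affineness.
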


\begin{prop}(Samuel, see \cite{Fo}, Proposition 11.5)\label{2.2}
Let $k$ be a field of characteristic not $2$ and $f$ be a
non-degenerate quadratic form in  $k[x_1,x_2,x_3]$. Let $R
=k[x_1,x_2,x_3]/(f)$. If $f=0$ has no non-trivial solution in $k$
then $\Cl(R) = 0$.
\end{prop}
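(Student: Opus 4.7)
The plan is to translate $\Cl(R)$ into the Picard group of the smooth projective conic $C := \Proj(R) \subset \PP^2_k$ and then use that $C$ is a genus-zero curve without $k$-rational points.

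First I would verify the geometric setup. The hypothesis that $f=0$ has no non-trivial solution in $k$ forces $f$ to be irreducible, since otherwise $f$ would factor into two linear forms over $k$, each having a non-trivial zero. Hence $R$ is a two-dimensional graded domain. Non-degeneracy of $f$ (in characteristic $\neq 2$) means the partial derivatives of $f$ generate an ideal whose radical is the irrelevant ideal, so $R$ is regular on its punctured spectrum; in particular $R$ is normal, and $C = \Proj(R)$ is a smooth plane conic over $k$ satisfying $C(k) = \emptyset$.

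Next, I would apply the standard exact sequence relating the class group of a normal graded $k$-algebra (generated in degree one) to the Picard group of its Proj (cf.\ Hartshorne, Exercise II.6.3, or \cite{Fo}):
\begin{equation*}
\ZZ \xrightarrow{\,n \,\mapsto\, [\OO_C(n)]\,} \Pic(C) \longrightarrow \Cl(R) \longrightarrow 0.
\end{equation*}
This reduces the claim $\Cl(R)=0$ to showing that $\Pic(C)$ is generated by the hyperplane class $[\OO_C(1)]$. Since $\deg \OO_C(1) = 2$, it suffices to verify that every line bundle on $C$ has even degree.

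Finally, since $C_{\bar k} \cong \PP^1_{\bar k}$, the curve $C$ has arithmetic genus $0$, so $\Pic^0(C)=0$ and the degree map $\Pic(C)\to\ZZ$ is injective. If its image contained $1$, then a divisor $D$ of degree $1$ would satisfy $h^0(C,\OO_C(D))=2$ by Riemann--Roch (using $\deg K_C=-2$); a non-zero section would cut out an effective divisor of degree one, namely a closed point of residue degree $1$, i.e.\ a $k$-rational point of $C$, contradicting $C(k)=\emptyset$. Hence the image of the degree map is exactly $2\ZZ = \ZZ\cdot[\OO_C(1)]$, and $\Cl(R)=0$. The only mildly delicate step is the exact sequence relating $\Cl(R)$ and $\Pic(C)$; once that is in hand, the rest is a direct application of Riemann--Roch on a genus-zero curve.
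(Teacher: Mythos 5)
Your argument is correct. Note, however, that the paper itself gives no proof of this statement: it is quoted as a classical result of Samuel, with a pointer to Fossum's book (Proposition 11.5), so there is no internal proof to compare yours against. What you have written is a legitimate self-contained proof of the cited fact, and it is essentially the standard geometric one: identify $\Cl(R)$ with $\Pic(C)/\ZZ[\OO_C(1)]$ for the smooth conic $C=\Proj(R)$ (your normality check via the Jacobian criterion plus Serre's criterion for the hypersurface is what makes the cone sequence applicable), observe that the degree map on $\Pic(C)$ is injective because $C$ has genus $0$, and rule out degree $1$ via Riemann--Roch and the absence of $k$-rational points. All steps check out; the only cosmetic remark is that the injectivity of the degree map can be obtained by the same Riemann--Roch computation (a degree-$0$ line bundle has $h^0\geq\chi=1$, hence is trivial), so you do not separately need $\Pic^0(C)\hookrightarrow\Pic^0(C_{\bar k})=0$.
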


\begin{prop}(Kamoi-Kurano)\label{2.3}
Let $S = \oplus_{n\geq 0} S_n$ be a graded ring over a field $S_0$
and $S_{+}= \oplus_{n > 0} S_n$. Let $A=S_{S_{+}}$. Then the map
$\G(S) \to \G(A)$ induced by localization is an isomorphism.
\end{prop}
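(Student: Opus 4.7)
The plan is to prove that the map $\G(S) \to \G(A)$ is bijective. Surjectivity is straightforward: any f.g.\ $A$-module $N$ has the form $M \otimes_S A$ for some f.g.\ $S$-module $M$ obtained by clearing denominators in a presentation of $N$. For injectivity, I invoke the localization exact sequence in $G$-theory, $\G(\mathcal{T}) \to \G(S) \to \G(A) \to 0$, where $\mathcal{T}$ is the category of f.g.\ $S$-modules $M$ with $M \otimes_S A = 0$, equivalently $\ann_S(M) \not\subseteq S_{+}$. Filtering such a module by submodules with cyclic quotients $S/p_i$ with $p_i$ in the support (standard devissage) shows that the image of $\G(\mathcal{T}) \to \G(S)$ is generated by classes $[S/p]$ for primes $p$ with $p \not\subseteq S_{+}$. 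The task thus reduces to proving $[S/p] = 0$ in $\G(S)$ for every such $p$.

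For such a prime, I construct a flat $k[t]$-family whose fiber at $t=1$ is $S/p$ and whose fiber at $t=0$ is zero. Define a ring homomorphism $\phi_t \colon S \to S[t]$ by $\phi_t(s_n) = t^n s_n$ on a homogeneous element $s_n$ of degree $n$ (well-defined because $S$ is positively graded), and let $p_t \subseteq S[t]$ denote the ideal generated by $\phi_t(p)$. Set $M = S[t]/p_t$. Since $\phi_t$ specializes to the identity at $t=1$, we have $M/(t-1)M = S/p$. Since $p \not\subseteq S_{+}$, there is some $f \in p$ with nonzero constant term $f_0 \in k^{\times}$, and $\phi_t(f) \equiv f_0 \pmod{t}$; consequently $p_t + (t) = S[t]$, so $M/tM = 0$. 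This last condition forces $t$ to be a unit in the ring $M$. After inverting $t$, the map $\phi_t$ extends to an automorphism of $S[t, t^{-1}]$ that identifies $M$ with $(S/p) \otimes_k k[t, t^{-1}]$; in particular $M$ is flat over $k[t]$.

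Finally I apply the specialization principle for $G$-theory: by Quillen's homotopy invariance, the base-change map $\G(S) \to \G(S[t])$ is an isomorphism whose inverse is computed by $t \mapsto c$ specialization for any $c \in k$; consequently, for any f.g.\ $k[t]$-flat $S[t]$-module $N$ the class $[N/(t-c)N] \in \G(S)$ is independent of $c$. Applied to $M$ with $c = 1$ and $c = 0$ this yields $[S/p] = [M/(t-1)M] = [M/tM] = [0] = 0$ in $\G(S)$, completing the argument. The chief obstacle is justifying the specialization principle; if needed, it can be established directly from the short exact sequences $0 \to M \xrightarrow{t-c} M \to M/(t-c)M \to 0$ in $\G(S[t])$ together with the homotopy invariance isomorphism $\G(S) \cong \G(S[t])$.
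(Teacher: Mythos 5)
Your proof is correct, but the comparison here is a bit lopsided: the paper does not actually prove this proposition at all --- it simply cites the proof of Theorem 1.5(ii) in Kamoi--Kurano \cite{KK}. What you have written is a complete, self-contained argument, and it is the standard one: the localization sequence $\G(\mathcal{T}) \to \G(S) \to \G(A) \to 0$ plus prime filtration reduces everything to showing $[S/p]=0$ for primes $p \not\subseteq S_{+}$, and the $\mathbb{G}_m$-degeneration $\phi_t(s_n) = t^n s_n$ produces a $k[t]$-flat family interpolating between $S/p$ (at $t=1$) and the zero module (at $t=0$), which kills the class by homotopy invariance of $\G(-)$ under polynomial extension. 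All the individual steps check out: $t$ really is a unit in $M = S[t]/p_t$ because $p_t + (t) = S[t]$, so $M = M[t^{-1}] \cong (S/p)[t,t^{-1}]$ via the automorphism of $S[t,t^{-1}]$, giving flatness over $k[t]$. (You are implicitly using that $S$ is Noetherian, both for the prime filtration and for homotopy invariance of $\G$; this is harmless since the proposition is only applied to quotients of polynomial rings.) The one place where you are slightly glib is the closing remark: the short exact sequences $0 \to M \xrightarrow{t-c} M \to M/(t-c)M \to 0$ by themselves only show that the \emph{pushforward} of $[M/(t-c)M]$ vanishes in $\G(S[t])$, not that the class vanishes in $\G(S)$; the clean way to finish is the one you give first, namely that each specialization map $\sigma_c([L]) = [L/(t-c)L] - [\ker(t-c\colon L \to L)]$ is a left inverse of the (surjective) flat pullback $\G(S) \to \G(S[t])$, hence is the unique two-sided inverse and therefore independent of $c$. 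In short: your argument is sound and, unlike the paper, actually supplies the proof rather than outsourcing it.
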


\begin{proof}
See the proof of Theorem 1.5 (ii) in \cite{KK}.
\end{proof}

Proposition \ref{main} now follows from the following Propositions
(clearly, $R$ is normal, since the singular locus $V(x,y,z)$ has
codimension $2$):

\begin{prop}
$[\hat M]=0$ in $\G(\hat R)$.
\end{prop}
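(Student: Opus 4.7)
The plan is to pass to the completion $\hat R$, where $w+1$ becomes a unit and moreover acquires a square root, then use this to rewrite the defining relation in a factored form that exhibits $\hat M$ as the cokernel of multiplication by a regular element on a nice cyclic module, forcing $[\hat M]=0$.

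First I would set $u := \sqrt{1+w} \in \RR[[w]] \subset \hat R$, well defined by the binomial series since $1+w$ has constant term $1$, and let $v := uz$. Because $u$ is a unit with $u \equiv 1$ modulo the maximal ideal, $\{x,y,v,w\}$ is again a regular system of parameters for $\RR[[x,y,z,w]]$, so
$$\hat R \;\cong\; \RR[[x,y,v,w]]/(x^2+y^2-v^2).$$
Under this identification $P=(x,y,z)=(x,y,v)$ because $u$ is a unit, and the defining relation rewrites as $(v-x)(v+x)=y^2$.

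Next I would introduce the auxiliary ideal $P_1 := (y,\,v-x) \subset \hat R$. Two observations drive the argument. First, passing to $\hat R/P_1$ forces $y=0$ and $v=x$, which turns the relation $x^2+y^2-v^2=0$ into the tautology $0=0$; hence $\hat R/P_1 \cong \RR[[x,w]]$, a regular domain. Second, $P_1+(x) = (y, v-x, x) = (x,y,v) = P$. Combining these, multiplication by $x$ is injective on $\hat R/P_1 \cong \RR[[x,w]]$ and has cokernel $\hat R/(P_1+(x)) = \hat M$, giving the short exact sequence of $\hat R$-modules
$$0 \to \hat R/P_1 \xrightarrow{\,\cdot x\,} \hat R/P_1 \to \hat M \to 0,$$
from which $[\hat M]=0$ in $\G(\hat R)$.

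The only delicate point is justifying that the substitution $z \leftrightarrow v = uz$ is an honest automorphism of $\RR[[x,y,z,w]]$ and that it sends $(x,y,z)$ to $(x,y,v)$; both follow at once from $u$ being a unit. The conceptual reason the argument is only available after completion is precisely that $\sqrt{1+w}$ does not exist in $R$: the factorization $v^2=x^2+y^2$ exploited above is genuinely a completion-only phenomenon, consistent with the later claim that $[M] \neq 0$ in $\G(R)$.
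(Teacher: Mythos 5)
Your proof is correct and is essentially the paper's argument: both exploit $\sqrt{w+1}\in\hat R$ to split the quadratic form into a product over the completion, produce a height-one prime $Q$ with $\hat R/Q$ a regular domain and $P\hat R=Q+(\text{one element})$, and read off $[\hat M]=0$ from the resulting short exact sequence. The only (cosmetic) difference is that you first change variables to $v=\sqrt{w+1}\,z$ and factor $v^2-x^2=y^2$ with $Q=(y,v-x)$, whereas the paper factors $y^2-(w+1)z^2$ directly and takes $Q=(x,\,y-\sqrt{w+1}\,z)$ with multiplication by $y+\sqrt{w+1}\,z$.
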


\begin{proof}
$\hat R = \RR[[x,y,z,w]]/(x^2+y^2-(w+1)z^2)$. We want to show that
$[\hat R/P\hat R] =0$ in $\G(\hat R)$. Let $\alpha = \sqrt{w+1}$
which is a unit in $\hat R$. Let $Q=(x,y-\alpha z)\hat R$. Then
clearly $Q$ is a height $1$ prime in $ \hat R$ and $P\hat R = Q +
(y+\alpha z)\hat R$. The short exact sequence:

$$ 0 \to \hat R/Q \to \hat R/Q \to \hat R/P\hat R \to 0$$
where the second map is the multiplication by $y+\alpha z$ shows
that $[\hat R/P\hat R]=0$ in $\G(\hat R)$.
\end{proof}

\begin{prop}
 $[M] \neq 0$ in $\G(R)$.
\end{prop}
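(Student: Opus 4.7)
The plan is to reduce via localization at $P = (x,y,z)$ to the Grothendieck group of a $2$-dimensional graded quadric over $\RR(w)$, and then combine the three cited results to show that the image of $[M]$ is the nonzero $2$-torsion class. Since $R/P \cong \RR[w]_{(w)}$ is one-dimensional, $P$ has height two and $R_P$ is a $2$-dimensional normal local Cohen-Macaulay domain with residue field $k(P) = \RR(w)$. The localization homomorphism $\G(R) \to \G(R_P)$ sends $[M]$ to $[R_P/PR_P] = [k(P)]$, so it will suffice to show $[k(P)] \ne 0$ in $\G(R_P)$.

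Next I would identify $R_P$ with $S_{S_+}$, where $S = \RR(w)[x,y,z]/(f)$ with $f = x^2+y^2-(w+1)z^2$ is graded over $\RR(w)$ by $\deg x = \deg y = \deg z = 1$. Proposition \ref{2.3} then gives $\G(S) \cong \G(R_P)$, sending $[S/S_+]$ to $[k(P)]$. Since $f$ is a non-degenerate ternary quadratic form over $\RR(w)$, its even Clifford algebra $C_0(f)$ is a quaternion algebra, hence simple, and Corollary \ref{2.1} yields $\G(S) = \ZZ \oplus \ZZ/2\ZZ$.

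The main obstacle is to verify the hypothesis of Proposition \ref{2.2}, namely that $f = 0$ has no non-trivial solution in $\RR(w)$; equivalently, $w+1$ is not a sum of two squares in $\RR(w)$. The argument I have in mind is by evaluation at $w = -2$: if $(w+1)q^2 = p_1^2+p_2^2$ for some $p_1, p_2, q \in \RR[w]$ chosen so that $w+2$ does not divide all three, then plugging in $w=-2$ gives $-q(-2)^2 = p_1(-2)^2+p_2(-2)^2$, forcing $p_1(-2) = p_2(-2) = q(-2) = 0$ and contradicting the minimality. Samuel's result then gives $\Cl(S) = 0$, so every height-one prime of $S$ is principal and has trivial class in $\G(S)$.

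To conclude, by d\'evissage every rank-zero class in $\G(S)$ is a $\ZZ$-linear combination of classes $[S/\mathfrak{q}]$ for primes $\mathfrak{q}$ of positive height. The height-one contributions vanish, and under the Kamoi-Kurano isomorphism each $[S/\mathfrak{m}]$ with $\mathfrak{m}$ a maximal ideal different from $S_+$ maps to zero in $\G(R_P)$ (since $(S/\mathfrak{m})_{S_+} = 0$), hence is zero in $\G(S)$ as well. Thus the torsion subgroup $\ZZ/2\ZZ$ of $\G(S) \cong \G(R_P)$ is generated by $[S/S_+] \leftrightarrow [k(P)]$, which must therefore be nonzero. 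Hence $[M] \ne 0$ in $\G(R)$.
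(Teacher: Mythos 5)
Your proof is correct and follows essentially the same route as the paper: localize at $P$, apply Kamoi--Kurano and Swan to get $\G(R_P)\cong\ZZ\oplus\ZZ/2\ZZ$, use Samuel's theorem (via the non-solvability of $f=0$ over $\RR(w)$) to kill the height-one classes, and conclude that $[R_P]$ and $[k(P)]$ generate a non-cyclic group, so $[k(P)]\neq 0$. The only cosmetic difference is your verification that $w+1$ is not a sum of two squares in $\RR(w)$ --- evaluation at $w=-2$ plus descent, versus the paper's degree-parity argument --- and both are valid.
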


\begin{proof}
It is enough to show that $[M_P] \neq 0$ in $\G(R_P)$. Let $K =
\RR(w)$ then $R_P \cong K[x,y,z]_{(x,y,z)}/(f)$ where $f =
x^2+y^2-(w+1)z^2$. Let $S = K[x,y,z]/(f)$. Clearly $f$ is a
non-degenerate quadratic form. Since the rank of $f$ is $3$, an odd
number, $C_0(f)$ is a simple algebra over $K$ (see, for example,
\cite{La}, Chapter 5, Theorem 2.4). By \ref{2.1} and \ref{2.3},
$\G(R_P) = \G(S) = \ZZ\oplus\ZZ/(2)$. We claim that $f$ has no
non-trivial solution in $K$. Suppose it has. Then by clearing
denominators, there are polynomials $a(w), b(w), c(w) \in \RR[w]$
such that
$$ a(w)^2 + b(w)^2 = (w+1)c(w)^2.$$
The degree of $ a(w)^2 + b(w)^2$ is always even. The degree of
$(w+1)c(w)^2$ is odd unless $c(w)=0$. But then $ a(w)^2 + b(w)^2 =0$
which forces $a(w)=b(w)=0$, a contradiction. By the claim and
\ref{2.2}, $\Cl(R_P) = \Cl(S)=0$. Thus $[R_P]$ and $[R_P/PR_P]$
generate $\G(R_P)=\ZZ\oplus\ZZ/(2)$ (since the Grothendieck group is
generated by $\{[R_P/Q], Q\in \Spec R_P\} $ and $\dim R_P =2$).
Since $\ZZ\oplus\ZZ/(2)$ can not be generated by one element,
$[R_P/PR_P]$ must be nonzero (it is easy to see that $[R_P/PR_P]$ is
$2$-torsion).
\end{proof}

\section{On the kernel of the map $\G(R) \to \G(\hat R)$}\label{kernel}

In this section we raise some questions about the kernel of the map
$\G(R) \to \G(\hat R)$. First we fix some notations. Throughout this
section we will assume, for simplicity, that $R$ is excellent, and
is  a homomorphic image of a regular local ring $T$. Let $d= \dim
R$. Let $\A_i(R)$ be the i-th Chow group of $R$, i.e.,

$$
\A_i(R) = \frac{\displaystyle \bigoplus_{P\in \Spec R, \dim R/P = i
} \ZZ \cdot [\Spec R/P] }{\langle \divi(Q,x) \ | \ Q\in \Spec R,
\dim R/Q = i+1 , \ x\in R\backslash Q \rangle}
$$
where
$$
\divi(Q,x) = \sum_{P \in \Min_R R/(Q,x)} l_{R_P}(R_P/(Q,x)R_P)[\Spec
R/P].
$$
For an abelian group $A$, we let $A_{\mathbb{Q}} =
A\tensor_{\mathbb{Z}}\mathbb{Q}$. The Chow group of $R$ is defined
to be $\A_*(R)=\displaystyle \oplus_{i=0}^{d} \A_i(R)$. It is well
known that there is a $\QQ$-vector space isomorphism:
$$ \tau_{R/T} : \G(R)_{\mathbb{Q}}\to \A_*(R)_{\mathbb{Q}}$$
(It is unknown whether this is independent of $T$). We also remark
that the Grothendieck group $\G(R)$ admits a filtration by $F_i\G(R)
= \langle [M] \in \G(R) \ | \ \dim M \leq i \rangle $.

The existing examples on the failure of injectivity for the map
$\G(R)\to \G(\hat R)$ and the affirmative results in \cite{KK}
motivate the following question:

\begin{ques}\label{kerGro}
Suppose that $R$ satisfies $(\Reg_n)$ (i.e, regular in codimension
$n$). Then is $\ker (\G(R) \to \G(\hat R))$  contained in
$F_{d-n-1}\G(R)$?
\end{ques}

Question \ref{kerGro} is closely related to the following:

\begin{ques}\label{kerChow}
Suppose that $R$ satisfies $(\Reg_n)$. Then is the map $\A_i(R) \to
\A_i(\hat R)$ injective for $i\geq d-n$?
\end{ques}

In fact, if we allow rational coefficients, then the previous
questions are equivalent. Let $\G^{i}(R) = F_{i}\G(R)/F_{i-1}\G(R)$.
Then clearly we have a decomposition:
 $$ \G(R)_{\QQ} =
\bigoplus_{i=0}^{d} \G^i(R)_{\QQ}$$ Also, the Riemann-Roch map
decomposes into isomorphisms $\tau^i: \G^i(R)_{\QQ} \to
A_i(R)_{\QQ}$, which make the following diagram:
\[
\xymatrix{ \G^i(R) \ar[d]^{g_i} \ar[r]^{\tau_{R/T}^i} & \A_i(R) \ar[d]^{f_i} \\
\G^i(\hat R) \ar[r]^{\tau_{\hat R/\hat T}^i} & \A_i(\hat R) }
\]
commutative. It follows that $$ \ker (\G(R)_{\QQ} \to \G(\hat
R)_{\QQ}) \cong \bigoplus_i^d \ker (f_i) \cong \bigoplus_i^d \ker
(g_i).$$ Thus we have:

\begin{prop}
Let $R$ be an excellent local ring which is a homomorphic image of a
regular local ring. Let $\dim R=d$ and let $0 < l\leq d$ be an
integer. Then the maps $\A_i(R)_{\QQ} \to \A_i(\hat R)_{\QQ}$ are
injective for $i\geq l$ if and only if $\ker (\G(R)_{\QQ} \to
\G(\hat R)_{\QQ}) \subseteq F_{l-1}\G(R)_{\QQ}$.
\end{prop}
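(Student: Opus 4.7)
The plan is to exploit the rational decomposition $\G(R)_{\QQ} = \bigoplus_{i=0}^d \G^i(R)_{\QQ}$ recalled just above the statement, together with the commutative square involving the Riemann--Roch isomorphisms $\tau^i$ and the flat base change maps $f_i$, $g_i$. Under this setup the assertion reduces to a purely formal statement about kernels of graded maps of $\QQ$-vector spaces.

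First I would check that the completion map $\G(R)\to \G(\hat R)$ respects the dimension filtration $F_\bullet$. Since $R\to \hat R$ is flat and local, $\dim \hat M = \dim M$ for every finitely generated $R$-module $M$, so $M \in F_i\G(R)$ forces $\hat M \in F_i\G(\hat R)$. Consequently completion induces the well-defined maps $g_i:\G^i(R)\to \G^i(\hat R)$ fitting into the commutative square displayed in the excerpt, and the two horizontal arrows $\tau_{R/T}^i$ and $\tau_{\hat R/\hat T}^i$ are isomorphisms after tensoring with $\QQ$.

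With this structure in place, the direct-sum decomposition gives
\[
\ker\bigl(\G(R)_{\QQ}\to \G(\hat R)_{\QQ}\bigr) = \bigoplus_{i=0}^d \ker(g_i \tensor \QQ),
\]
and the Riemann--Roch square identifies $\ker(g_i\tensor \QQ)$ with $\ker f_i$. Now the containment $\ker(\G(R)_{\QQ}\to \G(\hat R)_{\QQ}) \subseteq F_{l-1}\G(R)_{\QQ}$ is equivalent, under the decomposition, to the vanishing of the component in $\G^i(R)_{\QQ}$ for every $i\geq l$, which by the isomorphism is equivalent to $\ker f_i = 0$ for $i\geq l$, that is, to injectivity of $\A_i(R)_{\QQ}\to \A_i(\hat R)_{\QQ}$ for all such $i$. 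Both directions of the stated equivalence then follow at once.

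The one ingredient I would cite rather than reprove is the commutativity of the Riemann--Roch square under flat base change, already asserted in the excerpt; everything else is routine bookkeeping on a filtered $\QQ$-vector space. In this sense there is no real obstacle in this proposition itself: the content has been front-loaded into the existence and naturality of $\tau$, and the proof is essentially a translation between the filtered and the graded descriptions of $\G(R)_{\QQ}$.
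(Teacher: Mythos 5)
Your argument is correct and coincides with the paper's: the proposition is stated there as an immediate consequence of the rational decomposition $\G(R)_{\QQ}=\bigoplus_{i=0}^{d}\G^i(R)_{\QQ}$, the commutative Riemann--Roch square, and the resulting identification $\ker\bigl(\G(R)_{\QQ}\to\G(\hat R)_{\QQ}\bigr)\cong\bigoplus_i\ker(g_i)\cong\bigoplus_i\ker(f_i)$. Your preliminary check that completion preserves the dimension filtration (via flatness of $R\to\hat R$ and $\dim\hat M=\dim M$) is a detail the paper leaves implicit but is the right thing to verify.
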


We do not know if \ref{kerChow} is true even if $l=1$. Note that if
$R$ is normal, then both \ref{kerGro} and \ref{kerChow} are true for
$l=1$. In that situation $\A_1(R) \cong \Cl(R)$, and the map between
class groups of $R$ and $\hat R$ is injective. Furthermore, it is
well known that $\G(R)/F_{d-2}\G(R) \cong \A_d(R)\oplus\A_{d-1}(R)$
(see, for example \cite{Ch}, Corollary 1), so \ref{kerGro} is also
true for $l=1$ .

Finally, one could formulate a stronger version of \ref{kerGro} as
follows. Note that in both Hochster's example and the example
presented here, the support of the modules given actually equal to
the singular locus of $R$. So one could ask:

\begin{ques}
Let $R$ be an excellent local ring. Let $X=\Spec R$,  $Y =
\Sing(X)$, $\hat X = \Spec \hat R$ and $\hat Y = \Sing (\hat X)$.
One then has a commutative diagram:

\[
\xymatrix{ \G(Y) \ar[d] \ar[r]^{f} & \G(X) \ar[d]^{g} \\
\G(\hat Y) \ar[r] & \G(\hat X) }
\]
(Here $\G(X)$ denotes the Grothendieck group of coherent $\mathcal
O_X$-modules and the maps are naturally induced by closed immersions
or flat morphisms). Is $\ker(g)$ contained in $\im(f)$?
\end{ques}

\end{document}